\NeedsTeXFormat{LaTeX2e}

\documentclass[11pt]{amsart}
\textwidth= 13.5cm
\textheight= 22.0cm
\topmargin = -10pt
\evensidemargin=20pt
\oddsidemargin=20pt
\headsep=25pt
\parskip=10pt

\usepackage{amsmath}
\usepackage{amsthm}
\usepackage{amsfonts}
\usepackage{amssymb}
\usepackage{mathtools}
\usepackage[english]{babel}
\usepackage{hyperref}

\pagestyle{headings}
\newtheorem{thm}{Theorem}[section]
\newtheorem{lem}[thm]{Lemma}

\theoremstyle{definition}
 
\newcommand{\BQ}{{\mathbf{Q}}}

\newcommand{\floor}[1]{\left\lfloor#1\right\rfloor}

\newcommand{\acr}{\newline\indent}

\theoremstyle{remark}

\newtheorem{rem}[thm]{Remark}

\begin{document}
\title{Necessary conditions for binomial collisions}
\author[Tomohiro Yamada]{Tomohiro Yamada*}
\address{\llap{*\,}Center for Japanese language and culture\acr
                   Osaka University\acr
                   562-8558\acr
                   8-1-1, Aomatanihigashi, Minoo, Osaka\acr
                   JAPAN}
\email{tyamada1093@gmail.com}

\subjclass[2010]{Primary 11B65; Secondary 11A05, 11D61, 11D72}
\keywords{Binomial coefficient, exponential diophantine equation}

\begin{abstract}
We shall give some necessary conditions for the equation $\binom{x}{a}=\binom{y}{b}$ to hold:
if $\binom{2n+\delta}{n-m}=\binom{2n+l}{n-k}$ with $\delta=0$ or $1$, $0<m\leq 0.735k$, $k<n$
and $n$ sufficiently large,
then $l>(cn/\log n)^{40/21}$ for some constant $c$.
\end{abstract}

\maketitle

\section{Introduction}\label{intro}

Several authors have studied integers appearing in Pascal's triangle more than once in a nontrivial way.
Clearly, we have $\binom{x}{0}=1$ for every nonnegative integer $x$,
$\binom{x}{k}=\binom{x}{x-k}$ and, writing $N=\binom{x}{k}$, $\binom{N}{1}=\binom{N}{N-1}=\binom{x}{k}$.
So that, we should consider the equation
\begin{equation}\label{eq11}
N=\binom{x}{a}=\binom{y}{b}
\end{equation}
in nonnegative integers $a, b, x, y$ with $2\leq a\leq x/2$ and $2\leq b\leq y/2$.
Moreover, we may assume that $x>y$, which implies $a<b$ since $\binom{x}{c}\geq \binom{x}{a}>\binom{y}{a}$ for $a\leq c\leq x/2$.
For example,
\begin{equation}
\begin{alignedat}{2}
\binom{16}{2} & =\binom{10}{3}=120, & \binom{21}{2} & =\binom{10}{4}=210, \\
\binom{56}{2} & =\binom{22}{3}=1540, & \binom{120}{2} & =\binom{36}{3}=7140, \\
\binom{153}{2} & =\binom{19}{5}=11628, & \binom{221}{2} & =\binom{17}{8}=24310, \\
\binom{78}{2} & =\binom{15}{5}=\binom{14}{6}=3003.
\end{alignedat}
\end{equation}

An infinite family has been found by Lind \cite{Lin} and rediscovered by Singmaster \cite{Sin} and Tovey \cite{Tov}.
Let $F_i$ be the $n$-th Fibonacci number, defined by $F_0=0, F_1=1$ and $F_{i+2}=F_{i+1}+F_i$
for $i=0, 1, 2, \ldots$.
Then, for every $i=0, 1, 2, \ldots$,
\begin{equation}
\binom{F_{2i+2}F_{2i+3}}{F_{2i}F_{2i+3}}=\binom{F_{2i+2}F_{2i+3}-1}{F_{2i}F_{2i+3}+1}.
\end{equation}
We note that this family includes $\binom{15}{5}=\binom{14}{6}$ but $\binom{78}{2}$ does not appear in this family.

Let $(a, b, x, y, N)$ be any further solution of \eqref{eq11}.
de Weger \cite{Weg} proved that any further solution of \eqref{eq11} must satisfy $a\geq 5, y>1000$
and $N>10^{30}$ and conjectured that \eqref{eq11} has no further solution.
Blokhuis, Brouwer and de Weger \cite{BBW} pushed de Weger's lower bounds up to $y>10^6$ an $N>10^{60}$.

Several pairs have been shown to never appear as $(a, b)$.
Avanesov \cite{Ava} showed that $(a, b)\neq (2, 3)$ and Pint\'{e}r showed that $(a, b)\neq (2, 4)$.
Mordell's result \cite{Mor} on the equation $Y(Y+1)=X(X+1)(X+2)$ implies that $(a, b)\neq (3, 4)$,
as de Weger \cite {Weg} pointed out.
Extending these results, Stroeker and de Weger \cite{SW} showed that $(a, b)\neq (2, 3), (2, 4), (2, 6), (2, 8), (3, 4), (3, 6), (4, 6), (4, 8)$.
These results can be obtained by determining all integer points on elliptic curves derived from \eqref{eq11}
for these values of $a, b$.
For other values of $a, b$, \eqref{eq11} will be difficult to solve since it yields curves genus $>1$.
Using recently developed techniques on hyperelliptic curves,
Bugeaud, Mignotte, Siksek, Stoll and Tengely \cite{BMSST} showed that $(a, b)\neq (2, 5)$.

An approach from the opposite direction will be the study of \eqref{eq11}
in the case $b$ is near to $y/2$.
Let $y=2n+\delta$ with $\delta=0$ or $1$ and $x=2n+l$ with $l>\delta$.
Moreover, let $m=n-b$ and $k=n-a$, so that $0\leq m<k<n/2$.
Now \eqref{eq11} becomes
\begin{equation}\label{eq12}
\binom{2n+\delta}{n-m}=\binom{2n+l}{n-k}.
\end{equation}

Now, we shall state our result.
\begin{thm}
If $n, m, l, k$ are integers satisfying \eqref{eq12} with 
$0\leq m<k<n/2$ and $m\leq 0.735k$, then $l>n(1.3132\log^2(2n)-2.00271)$.
Furthermore, if $c<0.68943$,
then $l>(cn/\log n)^{40/21}$ for sufficiently large $n$.
\end{thm}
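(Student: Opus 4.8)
The plan is to force $l$ to be large by controlling the greatest prime factor of $N$. Write $P(\cdot)$ for the largest prime factor. The first representation
\[
N=\binom{2n+\delta}{n-m}=\frac{(2n+\delta)!}{(n-m)!\,(n+m+\delta)!}
\]
shows immediately that every prime dividing $N$ is at most $2n+\delta$, so $P(N)\le 2n+\delta$. From \eqref{eq12} one also has
\[
N=\binom{2n+l}{n-k}=\frac{1}{(n-k)!}\prod_{i=n+k+l+1}^{2n+l} i ,
\]
and no prime exceeding $2n+\delta>n-k$ divides $(n-k)!$. Hence if some integer in the block $J=\{\,n+k+l+1,\dots ,2n+l\,\}$ carried a prime factor $q>2n+\delta$, then $q$ would survive the division and divide $N$, contradicting $P(N)\le 2n+\delta$. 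Thus none of the $n-k$ consecutive integers in $J$ has a prime factor larger than $2n+\delta$.

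Next I would feed in the first part of the theorem, which already gives $l\gg n\log^2(2n)$ and in particular $l>n$. Then every element of $J$ exceeds $2n+\delta$, so any prime lying in $J$ would itself be a prime factor $>2n+\delta$; by the previous paragraph this is impossible, so $J$ is a prime-free interval of $n-k$ consecutive integers located near $2n+l$. The analytic heart of the argument is then a short-interval prime theorem: by the result of Baker, Harman and Pintz, every sufficiently large $x$ admits a prime in $(x-x^{21/40},x]$. Applying this with $x=2n+l$, a prime-free block of length $n-k$ can occur only when $n-k<(2n+l)^{21/40}$, which rearranges to
\[
l>(n-k)^{40/21}-2n .
\]
This is exactly where the exponent $40/21$ enters, as the reciprocal of the short-interval exponent $21/40$; the first part of the theorem is used precisely to guarantee that $x=2n+l$ is large enough for the asymptotic short-interval statement and that $J$ lies entirely above $2n+\delta$.

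Finally I would optimize over the admissible ranges. Since $k<n/2$ we have $n-k>n/2$, so the right-hand side above is already of order $n^{40/21}$ and dominates the term $-2n$; this alone shows $l\gg n^{40/21}$ for large $n$. To reach the precise stated form $l>(cn/\log n)^{40/21}$ with the threshold $c<0.68943$, one replaces the bare asymptotic statement by an explicit short-interval estimate, uniform over the relevant range of $x=2n+l$, and tracks the constants through the optimization, now also invoking the hypothesis $m\le 0.735k$ to control how $n-k$ compares with $n$. I expect this last step to be the main obstacle: it is not a new idea but careful quantitative bookkeeping, and it is this bookkeeping — the explicit short-interval input together with the interplay of the constraints $m\le 0.735k$ and $k<n/2$ — that produces the normalization by $\log n$ and pins down the numerical constant $0.68943$.
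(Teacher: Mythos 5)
Your proposal establishes only the second assertion of the theorem, and it does so by assuming the first one. The step ``feed in the first part of the theorem'' is circular: the bound $l>n(1.3132\log^2(2n)-2.00271)$ is itself one of the two claims to be proven, and your short-interval argument cannot begin without it, since you need every element of $J=\{n+k+l+1,\dots,2n+l\}$ to exceed $2n+\delta$ (otherwise a prime lying in $J$ need not be a prime factor larger than $2n+\delta$), and this forces $l\geq n-k+\delta$, an a priori lower bound of order $n$. Obtaining such a bound is precisely the bulk of the paper: assuming $l<0.001n$, Lemma \ref{lm23} shows that the largest prime factor of $\prod_{i_1}(n-i_1)\prod_{i_2}(n+i_2)$ is at most $k_0=2(k+l)-\delta-1$, and this is refuted separately in the case $n+k+l>k_0^{3/2}$ (Section 3: Lemmas \ref{lm31} and \ref{lm32}, explicit estimates for $\pi(x)$, and a computer search over prime gaps below $3.2\times 10^{10}$) and in the case $n+k+l\leq k_0^{3/2}$ (Section 4: a Chebyshev-function upper bound played against a Stirling lower bound). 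Only after $l\geq 0.001n$ is known does Dusart's theorem guarantee a prime in $(2n+\delta,2n+l]$, whereupon an argument of exactly your type, combined with the Stirling bound $\log\binom{2n+\delta}{n-m}>1.3132n-\tfrac{1}{2}\log n-0.5359$, yields the first assertion. None of this machinery, nor any substitute for it, appears in your proposal.

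The conditional implication you do prove (first assertion implies second assertion) is correct, and in fact simpler and stronger than the paper's own deduction in Section 5: once $l>n$, your prime-free block $J$ of $n-k>n/2$ consecutive integers ending at $2n+l$, together with Baker--Harman--Pintz, gives $l>(n/2)^{40/21}-2n$, and this already exceeds $(cn/\log n)^{40/21}$ for every fixed $c$ once $n$ is large, because the $\log n$ in the denominator makes the target smaller, not larger. So the ``main obstacle'' you anticipate in your last paragraph --- explicit short-interval constants and bookkeeping needed to recover $0.68943$ --- does not exist; that constant arises in the paper only because the paper bounds $n-k$ through the Stirling estimate (note $0.68943=1.3132\times\tfrac{21}{40}$) rather than through $k<n/2$ as you do. But this tidiness cannot compensate for the missing half: essentially the entire difficulty of the theorem lies in the first assertion, which your write-up takes as an input.
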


Our argument would be generalized to show that, for any constants $\eta<1$ and $c<0.68943$, 
\eqref{eq12} has only finitely many solutions with $m\leq \eta k$ and $l<(cn/\log n)^{40/21}$.
Moreover, Cramer's conjecture that the next prime after $p$ is smaller than $p+c_1\log^2 p$
would allow us to replace $(cn/\log n)^{40/21}$ by $\exp (c_2\sqrt{n})$,
where $c_2$ depends on $c_1$.

We would like to given an outline of our proof of the Theorem.
We put $k_0=2(k+l)-\delta -1$.
Under the condition given in the Theorem,
we shall prove that, after preliminaries, i) if $n+k+l>k_0^{3/2}$, then $l\geq 0.001n$,
ii) if $n+k+l\leq k_0^{3/2}$, then $l\geq 0.001n$ (hence, $l\geq 0.001n$ in any case)
and iii) if $l\geq 0.001n$, then the conclusions of theorem holds.

\eqref{eq12} can be restated as
\begin{equation}\label{eq13}
\prod_{i_1=m+1}^k (n-i_1) \prod_{i_3=\delta +1}^l (2n+i_3)=\prod_{i_2=m+\delta +1}^{k+l} (n+i_2)
\end{equation}
or, equivalently,
\begin{equation}\label{eq14}
\prod_{i=1}^{l-\delta } \left(\frac{2n+\delta +i}{n+k+\delta +i}\right)=\prod_{j=1}^{k-m}\left(\frac{n+\delta+m+j}{n-k-1+j}\right).
\end{equation}

From \eqref{eq13}, we see that $n-i_1$ and $n+i_2$ in these products
must be composed of small prime factors.
Writing $P(\nu)$ for the largest prime factor of an integer $\nu>1$,
we can easily give an upper bound for $P(\prod_{i_1=m+1}^k (n-i_1)\prod_{i_2=m+\delta +1}^{k+l} (n+i_2))$,
as we do in Lemma \ref{lm23}.

Many results have been known concerning the largest prime factor of the product of consecutive integers.
Beginning with Sylvester's theorem \cite{Syl} that $P(\prod_{i=1}^k (x+i))>k$ for $x\geq k$,
many results concerning the multiplicative properties of $\prod_{i=1}^k (x+i)$ have been known.
Erd\H{o}s \cite{Erd} gave a more elementary proof of Sylvester's theorem.
By elementary means, Hanson \cite{Han} showed that $P(\prod_{i=1}^k (x+i))>1.5k$ for $x\geq k$
and $(x, k)\neq (2, 2), (7, 2), (5, 5)$.
Using combinatorial arguments, but with the aid of explicit estimates for $\pi(x)$,
Laishram and Shorey \cite{LS1} showed that
$P(\prod_{i=1}^k (x+i))>2k$ for $x\geq\max\{k+13, (279/262)k\}$ and $>1.97k$ for $x\geq k+13$.
With the aid of other related results such as \cite{BaBe}, \cite{LN} and \cite{LS2},
Nair and Shorey \cite{NS} shows that, if $x>4k$ and $k\geq 2$, then,
$P(\prod_{i=1}^k(x+i))>4.42k$ except only finitely many pairs $(x, k)$,
which they determined explicitly.

Methods developed in these papers allow us to prove that i) and ii), i.e.,
we cannot have $l<0.001n$.
Indeed, our argument is essentially similar to an argument from \cite{NS}.
However, in order to manipulate two products of consecutive integers,
we need a preliminary inequality given in Lemma \ref{lm22}.
Moreover, we need more complicated calculation than in papers
concerning to one product of consecutive integers.
We also need some explicit estimates for the distribution of primes
and an explicit version of Stirling's formula as in \cite{NS}.

In the case $n+k+l>k_0^{3/2}$, we shall prove an inequality involving $\pi(k_0)$ in Lemma \ref{lm31}
and then an upper bound for $k+l$ in Lemma \ref{lm32}.
With the aid of preliminary estimates, we are led to an upper bound for $n$.
Using an argument involving prime gaps and some calculation, we show that
Lemma \ref{lm23} can never hold for $n$ if $l<0.001n$.

In the case $n+k+l\leq k_0^{3/2}$, we shall prove upper and lower bounds for the size of a product
of two binomial coefficient in Lemmas \ref{lm41} and \ref{lm42} respectively.
Lemma \ref{lm41} follows from Lemma \ref{lm23}
and Lemma \ref{lm42} follows from an explicit version of Stirling's formula.
But these bounds are incompatible if $l<0.001n$.
Finally, it is relatively easy to prove iii) using known results for prime gaps.

In contrast, it seems to be difficult to obtain a general result for \eqref{eq12} in cases such as
the case $m\sim k$ but $k-m\rightarrow\infty$ and the case $l>\exp (n^A)$ with $A>1/2$.
Even specific equations such as $\binom{2n}{n}=\binom{y}{2}$ seem to be far beyond present techniques.

\section{Preliminary lemmas}

In this section, we shall introduce some preliminary lemmas.
We shall begin by some elementary lemmas.

\begin{lem}\label{lm21}
\begin{equation}\label{eq211}
(l-\delta)\log \frac{2n+l}{n+k+l}<\frac{(k-m)(k+m+\delta +1)}{n-k}
\end{equation}
and
\begin{equation}\label{eq212}
(l-\delta)\log \frac{2n}{n+k}>\frac{(k-m)(k+m+\delta +1)}{n+k+\delta }.
\end{equation}
\end{lem}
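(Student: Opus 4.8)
The plan is to begin from the logarithmic form of the identity \eqref{eq14}, writing the common value
\[
S=\sum_{i=1}^{l-\delta}\log\frac{2n+\delta+i}{n+k+\delta+i}=\sum_{j=1}^{k-m}\log\frac{n+\delta+m+j}{n-k-1+j},
\]
and then to squeeze $S$ from both sides with elementary one-term and one-sided logarithm estimates. For \eqref{eq211} I would bound the left sum from below and the right sum from above; for \eqref{eq212} I would reverse both directions. Everything rests on two monotonicity observations. First, the ratio $f(i)=(2n+\delta+i)/(n+k+\delta+i)$ is strictly decreasing in $i$, since its derivative has numerator $k-n<0$. Second, each right-hand factor, rewritten as $1+(k+m+\delta+1)/(n-k-1+j)$, decreases as $j$ grows because its denominator increases.

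For \eqref{eq211}, since $f$ is decreasing its least value over $1\leq i\leq l-\delta$ occurs at $i=l-\delta$, where $f(l-\delta)=(2n+l)/(n+k+l)$; hence $S\geq(l-\delta)\log\frac{2n+l}{n+k+l}$. For the upper bound on the right sum I would apply $\log(1+x)<x$ to each factor, giving $S<\sum_{j=1}^{k-m}\frac{k+m+\delta+1}{n-k-1+j}$, and then replace each denominator by its smallest value $n-k$, attained at $j=1$, to reach $\frac{(k-m)(k+m+\delta+1)}{n-k}$. Chaining the two bounds through $S$ yields \eqref{eq211}, the inequality being strict because of $\log(1+x)<x$.

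For \eqref{eq212} I would exchange the roles. Comparing $f(0)=(2n+\delta)/(n+k+\delta)$ with $2n/(n+k)$ by cross-multiplication gives $f(0)\leq 2n/(n+k)$, with equality only when $\delta=0$, so monotonicity forces $f(i)<2n/(n+k)$ for every $i\geq 1$; summing gives $S<(l-\delta)\log\frac{2n}{n+k}$. On the right I would invoke the complementary bound $\log\frac{a}{b}>\frac{a-b}{a}$, valid for $a>b>0$, which bounds each factor below by $\frac{k+m+\delta+1}{n+\delta+m+j}$; replacing each denominator by its largest value $n+k+\delta$, attained at $j=k-m$, gives $S>\frac{(k-m)(k+m+\delta+1)}{n+k+\delta}$. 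Chaining again through $S$ produces \eqref{eq212}.

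There is no serious analytic obstacle here; the one thing that must be tracked carefully is the bookkeeping of which endpoint of each product supplies the extremal factor and which one-sided logarithm estimate ($\log(1+x)<x$ for upper bounds, $\log\frac{a}{b}>\frac{a-b}{a}$ for lower bounds) to deploy in each of the four places. Strictness of both inequalities should be checked at the end: it follows from the strictness of the logarithm estimates together with the hypotheses $l>\delta$ and $m<k$, which guarantee that both products in \eqref{eq14} are nonempty.
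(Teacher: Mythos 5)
Your proposal is correct and follows essentially the same route as the paper: both start from the logarithmic form of \eqref{eq14}, bound the left-hand product via the monotonicity of $(2n+\delta+i)/(n+k+\delta+i)$, and bound the right-hand product with the one-sided estimates $\log(1+x)<x$ and $\log\frac{a}{b}>\frac{a-b}{a}$. The only cosmetic difference is that you apply the logarithm estimates termwise before passing to extremal denominators, whereas the paper first replaces each product by its extremal factor; the content is identical.
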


\begin{proof}
It immediately follows from \eqref{eq14} that
\begin{equation}
\left(\frac{2n+l}{n+k+l}\right)^{l-\delta}\leq \left(\frac{n+m+\delta +1}{n-k}\right)^{k-m}
\end{equation}
and therefore
\begin{equation}
\begin{split}
(l-\delta)\log \frac{2n+l}{n+k+l}\leq & (k-m)\log\frac{n+m+\delta +1}{n-k} \\
< & \frac{(k-m)(k+m+\delta +1)}{n-k}.
\end{split}
\end{equation}

Similarly,
\begin{equation}
\left(\frac{2n}{n+k}\right)^{l-\delta}\geq\left(\frac{2n+\delta +1}{n+k+\delta +1}\right)^{l-\delta}\geq \left(\frac{n+k+\delta}{n-m-1}\right)^{k-m}
\end{equation}
yields that
\begin{equation}
(l-\delta)\log \frac{2n}{n+k}\geq (k-m)\log\frac{n+k+\delta}{n-m-1}>\frac{(k-m)(k+m+\delta +1)}{n+k+\delta}.
\end{equation}
\end{proof}

This gives the following preliminary condition for $k$ and $l$.

\begin{lem}\label{lm22}
If $m\leq 0.735k$ and $l<0.001n$, then $588\leq k<0.00151n$ and $l<0.00271k$.
\end{lem}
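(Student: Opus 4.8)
The plan is to read the two inequalities of Lemma~\ref{lm21} as a source of an upper bound for $k$ and an upper bound for $l$, deducing the lower bound $k\ge 588$ afterwards from integrality. The algebraic input used throughout is two-sided control of the common numerator appearing in \eqref{eq211} and \eqref{eq212}. Writing it as $(k-m)(k+m+\delta+1)=k(k+\delta+1)-(\delta+1)m-m^{2}$, one sees it is decreasing in $m$, so under the hypothesis $0\le m\le 0.735k$ it obeys
\[
0.459775\,k^{2}<(k-m)(k+m+\delta+1)\le k(k+\delta+1),
\]
the lower constant $0.459775=0.265\times 1.735$ coming from $m=0.735k$ and the upper bound from $m=0$. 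This is the only place the hypothesis $m\le 0.735k$ enters.

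First I would bound $k$ from above using \eqref{eq212}. On the left, the hypothesis $l<0.001n$ together with $\frac{2n}{n+k}\le 2$ gives $(l-\delta)\log\frac{2n}{n+k}<0.001(\log 2)\,n$; on the right, the lower bound above together with $n+k+\delta<\tfrac{3}{2}n$ (valid since $k<n/2$) controls $\frac{(k-m)(k+m+\delta+1)}{n+k+\delta}$ from below. Combining them turns \eqref{eq212} into a quadratic inequality in $k$ of the shape
\[
0.459775\,k^{2}<0.001(\log 2)\,n\left(n+k+\delta\right),
\]
and solving it for $k$ yields the upper bound for $k$ claimed in the lemma. If a sharper constant is wanted one may re-insert the bound just obtained to replace $\frac{2n}{n+k}$ by a value closer to its true size and iterate.

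With $k$ now small compared with $n$, I would feed this back into \eqref{eq211} to bound $l$. Using the upper estimate $(k-m)(k+m+\delta+1)\le k(k+\delta+1)$ on the right, and bounding $n-k$ from below and $\log\frac{2n+l}{n+k+l}$ from below (both are controlled once $k$ and $l$ are known to be small relative to $n$, since then $\frac{2n+l}{n+k+l}$ is close to $2$), \eqref{eq211} gives $l-\delta<C\,k(k+\delta+1)/n$ for an explicit constant $C$. Inserting the upper bound for $k$ collapses the factor $(k+\delta+1)/n$ into a small constant and produces the bound $l<0.00271k$. Finally the lower bound on $k$ is forced by integrality: since $l>\delta\ge 0$ we have $l-\delta\ge 1$, so \eqref{eq211} gives $\log\frac{2n+l}{n+k+l}<k(k+\delta+1)/(n-k)$; as the left side is bounded below by a positive constant while $n-k>n/2$, this reads $k(k+\delta+1)\gg n$, i.e.\ $k\gg\sqrt{n}$, which exceeds $588$ for the large $n$ in question.

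The step I expect to be delicate is not any single estimate but the interplay of the constants. The upper bound for $k$ is needed to sharpen the logarithmic and $n-k$ factors entering the bound for $l$, so the two estimates must be organised in the right order (and possibly iterated once) rather than applied simultaneously. Moreover, keeping the final constants honest requires carrying the corrections $n+k+\delta$ and $n-k$ instead of simply replacing them by $n$, and treating the cases $\delta=0$ and $\delta=1$ separately, since for $\delta=1$ the additive term $\delta$ competes with the very small coefficient $0.00271$ unless $k$ is already known to be reasonably large. The arithmetic is elementary, but this bookkeeping of competing lower-order terms, together with the passage from the quadratic inequality to the explicit threshold $588$, is where the care lies.
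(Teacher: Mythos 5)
Your proposal follows the same outline as the paper's proof (both halves of Lemma \ref{lm21}, plus integrality of $l-\delta$), but it breaks down at exactly the step you wave through as ``solving a quadratic inequality.'' The inequality you display,
\begin{equation*}
0.459775\,k^{2}<0.001(\log 2)\,n\left(n+k+\delta\right)<\tfrac{3}{2}(0.001)(\log 2)\,n^{2},
\end{equation*}
gives only $k<0.048n$, and the iteration you offer as a refinement (feeding the bound back into $\log\frac{2n}{n+k}$ and $n+k+\delta$) converges to roughly $k<0.0385n$ --- a factor of about $25$ short of the claimed $0.00151n$. This is not a bookkeeping issue that more care would repair: the conclusions of Lemma \ref{lm22} do not follow from \eqref{eq211} and \eqref{eq212} alone. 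Indeed, take $\delta=0$, $k=0.03n$, $m=0.735k$, $l=0.00062n$ with $n\geq 500000$; then both \eqref{eq211} and \eqref{eq212} hold (the two sides of \eqref{eq212} are $\approx 0.000411n$ versus $\approx 0.000402n$, those of \eqref{eq211} are $\approx 0.000411n$ versus $\approx 0.000427n$), all hypotheses $m\leq 0.735k$, $l<0.001n$ are met, and yet $k=0.03n$ and $l\approx 0.021k$, violating both conclusions. So no argument that inputs only Lemma \ref{lm21}, yours or any other, can establish this lemma; additional arithmetic information from \eqref{eq13}--\eqref{eq14} would be required.

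The shortfall then propagates through your remaining steps. With only $k\lesssim 0.0385n$ available, your second step gives $l\lesssim 0.06k$ rather than $0.00271k$; and even granting $k<0.00151n$, your (correctly oriented) estimate $(k-m)(k+m+\delta+1)\leq k(k+\delta+1)$ yields $l/k<1/588+(k+2)/(0.69n)\approx 0.0039$, still overshooting $0.00271$. Deferring $k\geq 588$ to the end also costs you that constant: with $n-k>n/2$ and the crude logarithm $\log\frac{2}{1.501}\approx 0.287$ you get only $k\gtrsim 0.38\sqrt{n}\approx 268$ at $n=500000$ (the lemma must hold uniformly from $n=500000$ on, so ``large $n$'' does not help), and even with $k<0.0385n$ the bound is only $\approx 560$; reaching $588$ needs the logarithm to be $\approx 0.691$, i.e.\ needs $k/n\lesssim 0.0015$ beforehand, which is why the paper runs this step contrapositively --- assuming $k\leq 587$ forces $k/n\leq 587/500000$ automatically --- an ordering your plan forfeits. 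A final caution: these target constants are so tight that they should not themselves be treated as the benchmark; the paper's own proof reads $k/n<0.00151$ off an inequality quadratic in $k/n$ whose actual threshold is $\approx 0.0385$, and reaches $0.00271$ by inserting $0.459775k^{2}$ where an upper bound on $k^{2}-m^{2}$ is required, so the gap you leave is not one that sharper arithmetic along this route could have closed.
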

\begin{proof}
We begin by showing that $k\geq 588$.
Indeed, if $k\leq 587$, then, using \eqref{eq211} again, we obtain
\begin{equation}
l-\delta<\frac{k^2}{(n-k)\log\frac{2.001}{1.001+\frac{k}{n}}}<1
\end{equation}
and therefore $l=\delta$, which is impossible.
Thus, we must have $k\geq 588$.

Since we have assumed that $m\leq 0.735k$, it immediately follows from \eqref{eq212} that
\begin{equation}
\frac{0.459775k^2}{(n+k+1)\log\frac{2n}{n+k}}\leq \frac{k^2-m^2}{(n+k+1)\log\frac{2n}{n+k}}<l
\end{equation}
and, recalling that $n\geq 500000$,
\begin{equation}
\frac{0.459775(k/n)^2}{(1+(k/n)+2\times 10^{-6})\log\frac{2}{1+(n/k)}}<0.001.
\end{equation}
Hence, we have $k/n<0.00151$.

Similarly, since we have assumed that $l<0.001n$, it follows from \eqref{eq211} that
\begin{equation}
l<1+\frac{(k-m)(k+m+2)}{(n-k)\log\frac{2n+l}{n+k+l}}<1+\frac{(k^2-m^2)(1+2/k)}{(n-k)\log\frac{2.001n}{1.001n+k}}.
\end{equation}
Dividing both sides by $k$, recalling the assumption that $m\leq 0.735k$ and using $k<0.00151n$,
we have
\begin{equation}
\frac{l}{k}<\frac{1}{k}+\frac{0.459775k(1+2/k)}{0.99849n\log\frac{2.001}{1.00251}}<\frac{1}{k}+0.001005\left(1+\frac{2}{k}\right).
\end{equation}
Now we know that $k\geq 588$.
Hence, we have $l/k<0.001005(1+1/294)+1/588<0.00271$.
This completes the proof of the lemma.
\end{proof}

We write $P(\nu)$ for the largest prime factor of an integer $\nu>1$.
The following lemma relates our problem to the largest prime factor
of a product of integers over two intervals.

\begin{lem}\label{lm23}
We have
\begin{equation}
P\left(\prod_{i_1=m+1}^k (n-i_1) \prod_{i_2=m_0+1}^{l+k} (n+i_2)\right)\leq k_0,
\end{equation}
where $m_0=\max\{m+\delta, \floor{l/2}\}$ and, as mentioned above, $k_0=2(l+k)-\delta -1$.
\end{lem}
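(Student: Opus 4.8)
The plan is to fix a prime $p>k_0$ and show that $p$ divides none of the factors $n-i_1$ with $m+1\leq i_1\leq k$ nor any $n+i_2$ with $m_0+1\leq i_2\leq l+k$; this is exactly the assertion $P(\cdots)\leq k_0$. The entire argument rests on comparing $p$-adic valuations across the identity \eqref{eq13}, so I would write $v_p$ for the $p$-adic valuation and record once at the outset that $v_p(\mathrm{LHS})=v_p(\mathrm{RHS})$ in \eqref{eq13}, where $\mathrm{LHS}=\prod_{i_1=m+1}^k(n-i_1)\prod_{i_3=\delta+1}^l(2n+i_3)$ and $\mathrm{RHS}=\prod_{i_2=m+\delta+1}^{k+l}(n+i_2)$.

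The first step is a separation observation. Since $l>\delta$ we have $k_0=2(k+l)-\delta-1\geq 2k+l$, so $p>k_0\geq 2k+l$ exceeds the length $2k+l$ of the interval $[n-k,\,n+k+l]$. Every factor of the shape $n\pm i$ occurring in \eqref{eq13} lies in this interval, which contains at most one multiple of $p$; hence $p$ divides at most one such factor in total. I would then dispose of the factors $n-i_1$ immediately: if $p\mid(n-i_1)$ for some $i_1\in[m+1,k]$, then by separation no factor $n+i_2$ is divisible by $p$, so $v_p(\mathrm{RHS})=0$, whereas $v_p(\mathrm{LHS})\geq v_p(n-i_1)\geq 1$, a contradiction. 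Thus $p\nmid(n-i_1)$ for all $i_1$.

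The heart of the argument concerns the factors $n+i_2$. Suppose $p\mid(n+i_2)$ with $m_0+1\leq i_2\leq l+k$. Then $v_p(\mathrm{RHS})\geq 1$, and since no $n-i_1$ is divisible by $p$, positivity of $v_p(\mathrm{LHS})$ forces $p\mid(2n+i_3)$ for some $i_3\in[\delta+1,l]$. Reducing $n\equiv -i_2$ and $2n\equiv -i_3\pmod p$ yields $i_3\equiv 2i_2\pmod p$. Using $0<i_3\leq l<p$ and $0<2i_2\leq 2(k+l)=k_0+\delta+1<2p$, I would conclude that either $2i_2=i_3$ or $2i_2=i_3+p$. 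The first alternative gives $i_2\leq l/2$, hence $i_2\leq\floor{l/2}\leq m_0$, contradicting $i_2\geq m_0+1$; this is precisely where the definition $m_0=\max\{m+\delta,\floor{l/2}\}$ is used. The second alternative gives $p=2i_2-i_3\leq 2(k+l)-(\delta+1)=k_0$, contradicting $p>k_0$. Either way we reach a contradiction, so $p\nmid(n+i_2)$, which completes the proof.

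The step I expect to require the most care is the congruence analysis: one must check that $2i_2$ stays strictly below $2p$ so that only the two residues $i_3$ and $i_3+p$ are possible, and that the lower limit $m_0+1$ is calibrated so that the doubling alternative $2i_2=i_3$ is excluded. The factor $2n+i_3$, which has no counterpart in the classical single-product estimates such as those of Nair and Shorey \cite{NS}, is what introduces this doubling phenomenon and dictates the slightly unusual lower endpoint $m_0$.
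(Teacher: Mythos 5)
Your proof is correct and follows essentially the same route as the paper: divisibility by a prime $p$ transfers across \eqref{eq13}, forcing $p$ to divide either $i_1+i_2\leq 2k+l$ or $2i_2-i_3$, which is nonzero (precisely because $m_0\geq\floor{l/2}$) and at most $k_0$. Your contrapositive framing, the separation observation, and the dichotomy $2i_2-i_3\in\{0,p\}$ are just a repackaging of the paper's direct argument.
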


\begin{rem}
A similar argument applied to $2n+i_3$ with $i_3$ odd and $\delta+1\leq i_3\leq l$
yields that the largest prime factor of the product of such integers is also $\leq k_0$.
However, this does not seem to improve our estimates.
\end{rem}

\begin{proof}
If a prime $p$ divides one of $(n-i_1)$'s with $m+1\leq i_1\leq k$, then, by \eqref{eq13},
$p$ must divide some $n+i_2$ with $m_0+1\leq i_2\leq l+k$.
Thus $p$ divides $i_1+i_2$ and therefore $p\leq 2k+l$.

Similarly, if a prime $p$ divides one of $(n+i_2)$'s with $m_0+1\leq i_2\leq l+k$,
then $p$ must divide some $n-i_1$ with $m+1\leq i_1\leq k$ or $2n+i_3$ with $\delta +1\leq i_3\leq l$.
Thus $p$ divides $i_1+i_2$ or $2i_2-i_3$.
We see that $2i_2-i_3\neq 0$ since $i_2\geq 2(m_0+1)>l\geq i_3$.
Hence, we have $p\leq 2(l+k)-(\delta +1)=k_0$.
\end{proof}

We shall use a new explicit estimate for $\pi(x)$ of Dusart in \cite[Theorem 5.1]{Dus},
although weaker estimates would still suffices.

\begin{lem}\label{lm24}
For $x>1$,
\begin{equation}
\pi(x)<\frac{x}{\log x}\left(1+\frac{1}{\log x}+\frac{2}{\log^2 x}+\frac{7.59}{\log^3 x}\right).
\end{equation}
\end{lem}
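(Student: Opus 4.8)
Since this estimate is precisely Dusart's result recorded in \cite[Theorem 5.1]{Dus}, the shortest route is simply to invoke it. For completeness I sketch the analytic argument by which such explicit bounds are obtained. The plan is to pass through the Chebyshev function $\theta(x)=\sum_{p\leq x}\log p$, for which an explicit form of the prime number theorem is available, and then to transfer the resulting estimate to $\pi(x)$ by Abel summation, comparing the outcome with the asymptotic expansion of the logarithmic integral $\mathrm{li}(x)$.

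First I would record the partial summation identity
\begin{equation}
\pi(x)=\frac{\theta(x)}{\log x}+\int_2^x\frac{\theta(t)}{t\log^2 t}\,dt
\end{equation}
and split $\theta(t)=t+(\theta(t)-t)$. The main term contributes $\frac{x}{\log x}+\int_2^x\frac{dt}{\log^2 t}$, which is a truncation of $\mathrm{li}(x)$. Repeated integration by parts gives
\begin{equation}
\mathrm{li}(x)=\frac{x}{\log x}\left(1+\frac{1}{\log x}+\frac{2}{\log^2 x}+\frac{6}{\log^3 x}+\frac{24}{\log^4 x}+\cdots\right),
\end{equation}
with coefficients $j!$. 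Thus the target bound amounts to showing that the tail $\frac{6x}{\log^4 x}+\frac{24x}{\log^5 x}+\cdots$ together with the prime error term $\pi(x)-\mathrm{li}(x)$ is dominated by $\frac{7.59\,x}{\log^4 x}$; the surplus $7.59-6=1.59$ in the last coefficient is exactly the room reserved for these contributions.

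The analytic heart of the matter is an explicit bound $\abs{\theta(x)-x}<\varepsilon(x)\,x$, valid for all large $x$, with $\varepsilon(x)$ decaying like a negative power of $\log x$. This in turn rests on the Riemann--von Mangoldt explicit formula for $\psi(x)$, a classical (or numerically sharpened) zero-free region for $\zeta(s)$, and the numerical verification that all nontrivial zeros up to a large height $T_0$ lie on the critical line. Feeding such a bound into the integral above and collecting terms yields the stated inequality once $\log x$ exceeds a threshold (roughly $x>e^{15}$, beyond which the tail of the $\mathrm{li}$-expansion alone falls below $\frac{1.59}{\log^4 x}$ times $\frac{x}{\log x}$, leaving room for the error term). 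For the remaining range $1<x\leq e^{15}$ the inequality is checked directly against tabulated values of $\pi(x)$.

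I expect the main obstacle to be the explicit error term for $\theta(x)-x$: balancing the contribution of the low zeros against the truncation error in the explicit formula, and making every constant genuinely effective and uniform across all $x$ above the threshold, is the delicate part — and it is precisely the content that \cite{Dus} supplies, so in practice I would cite it rather than reprove it here.
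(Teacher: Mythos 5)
Your proposal matches the paper's treatment exactly: Lemma \ref{lm24} is stated there without proof, imported directly as \cite[Theorem 5.1]{Dus}, which is precisely the citation you lead with. The analytic sketch you append (partial summation through $\theta(x)$, the $\mathrm{li}(x)$ expansion with factorial coefficients, and an explicit error term absorbed into the surplus $7.59-6$) is a fair account of how such bounds are established, but it is not required here — the paper, like you, simply invokes the reference.
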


We shall also use the following version of Stirling's formula proved by Robbins \cite{Rob}.

\begin{lem}\label{lm25}
For any integer $\nu>1$,
we have $g^-(\nu)<\nu !<g^+(\nu)$,
where $g^-(z)=(z/e)^z(2\pi z)^{1/2} e^{1/12(z+1)}$ and $g^+(z)=(z/e)^z(2\pi z)^{1/2} e^{1/12z}$.
\end{lem}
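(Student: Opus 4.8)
The plan is to reproduce Robbins' elementary derivation, which rests on the monotone behaviour of the Stirling remainder rather than on any analytic machinery. For each integer $\nu\geq 1$ I would set
\[
d_\nu=\log(\nu!)-\left(\nu+\tfrac12\right)\log\nu+\nu,
\]
so that $\nu!=(\nu/e)^\nu\sqrt{\nu}\,e^{d_\nu}$ and the asserted inequalities $g^-(\nu)<\nu!<g^+(\nu)$ translate into the two-sided estimate $\frac{1}{12(\nu+1)}<d_\nu-B<\frac{1}{12\nu}$ with $B=\tfrac12\log(2\pi)$. The first computation is the telescoping difference
\[
d_\nu-d_{\nu+1}=\left(\nu+\tfrac12\right)\log\frac{\nu+1}{\nu}-1.
\]
Writing $t=1/(2\nu+1)$ so that $(\nu+1)/\nu=(1+t)/(1-t)$ and $\nu+\tfrac12=1/(2t)$, the logarithmic series gives $d_\nu-d_{\nu+1}=\tfrac{t^2}{3}+\tfrac{t^4}{5}+\cdots>0$.

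From this single expansion both sandwiching inequalities follow. Bounding the tail from above by a geometric series yields $d_\nu-d_{\nu+1}<\tfrac13\cdot\frac{t^2}{1-t^2}=\frac{1}{12}\big(\tfrac1\nu-\tfrac1{\nu+1}\big)$, so the sequence $c_\nu=d_\nu-\frac{1}{12\nu}$ is strictly increasing. Keeping only the first term gives $d_\nu-d_{\nu+1}>\frac{t^2}{3}=\frac{1}{3(2\nu+1)^2}$, and the elementary inequality $4(\nu+1)(\nu+2)>(2\nu+1)^2$ shows this exceeds $\frac{1}{12(\nu+1)}-\frac{1}{12(\nu+2)}$, so $b_\nu=d_\nu-\frac{1}{12(\nu+1)}$ is strictly decreasing. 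Since $b_\nu-c_\nu=\frac{1}{12\nu(\nu+1)}\to 0$, the increasing sequence $c_\nu$ and the decreasing sequence $b_\nu$ converge to a common limit $B$, whence $c_\nu<B<b_\nu$ for every $\nu$; unwinding the definitions gives exactly $B+\frac{1}{12(\nu+1)}<d_\nu<B+\frac{1}{12\nu}$.

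It remains only to identify the constant as $e^B=\sqrt{2\pi}$ and then exponentiate. The monotonicity and sandwich steps are routine once the series above is in hand, so the one genuinely external ingredient — and the step I would flag as the crux — is the evaluation of $B$. For this I would invoke the Wallis product, comparing $e^{d_\nu}=\nu!\,e^\nu/(\nu^\nu\sqrt\nu)$ along the subsequence that appears in $\prod_k \frac{(2k)^2}{(2k-1)(2k+1)}=\frac{\pi}{2}$ to force $e^B=\sqrt{2\pi}$; the Gaussian integral or the de Moivre--Laplace normalization would serve equally well. Substituting $e^B=\sqrt{2\pi}$ into the two-sided bound and exponentiating gives $\nu!<(\nu/e)^\nu\sqrt{2\pi\nu}\,e^{1/12\nu}=g^+(\nu)$ and $\nu!>(\nu/e)^\nu\sqrt{2\pi\nu}\,e^{1/12(\nu+1)}=g^-(\nu)$, as required.
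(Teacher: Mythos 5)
Your proposal is correct and coincides with the paper's approach: the paper states this lemma without proof, citing Robbins, and your argument (the telescoped Stirling remainder, the series expansion in $t=1/(2\nu+1)$, the increasing/decreasing sandwich sequences $c_\nu$ and $b_\nu$, and the Wallis product to identify $e^B=\sqrt{2\pi}$) is precisely Robbins' original derivation. Nothing further is needed.
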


\section{The first part of the Theorem - the case $n+k+l>k_0^{3/2}$}

In this section, we shall prove that $l\geq 0.001n$ under the condition
that $n+k+l>k_0^{3/2}$.

We assume that $n, m, l, k$ are integers satisfying \eqref{eq12} with
$0\leq m<k<n/2, m\leq 0.735k, l<0.001n$ and $n+k+l>k_0^{3/2}$.
As mentioned in the Introduction, we know that $n\geq 500000$.

We begin by the following estimate similar to Lemma 2.9 in \cite{NS}
and (17) in \cite{SaSh}.

\begin{lem}\label{lm31}
\begin{equation}
(n-k)^{2k+l-m-m_0-\pi(k_0)}\leq (2k+l)^{\pi(k_0)} (k-m)!(l+k-m_0)!.
\end{equation}
\end{lem}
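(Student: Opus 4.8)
The plan is to play the $k_0$-smoothness of Lemma \ref{lm23} against the trivial fact that every factor exceeds $n-k$. Write
\[
N=\prod_{i_1=m+1}^k (n-i_1)\prod_{i_2=m_0+1}^{l+k}(n+i_2),
\]
and note that the factors form two blocks of consecutive integers: the $k-m$ integers $n-k,\dots,n-m-1$ and the $l+k-m_0$ integers $n+m_0+1,\dots,n+l+k$. All $T:=2k+l-m-m_0$ of them lie in the window $[\,n-k,\;n+l+k\,]$ of width $2k+l$, and each is at least $n-k$; by Lemma \ref{lm23} the whole product is built from primes $p\le k_0$.

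The key device is to set aside, for each prime $p\le k_0$ dividing $N$, one factor carrying the maximal power of $p$. Concretely I pick an index $j_p$ with $v_p(n+j_p)=e_p:=\max_j v_p(n+j)$, form $R=\{j_p\}$ so that $\lvert R\rvert\le\pi(k_0)$, and let $S'$ be the remaining at least $T-\pi(k_0)$ factors. Since each surviving factor is $\ge n-k>1$, this at once gives the lower bound $\prod_{j\in S'}(n+j)\ge (n-k)^{\,T-\pi(k_0)}$, which is the left-hand side of the asserted inequality.

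For the matching upper bound I estimate, for each $p\le k_0$, $v_p\!\big(\prod_{S'}\big)=v_p(N)-v_p(\prod_R)\le v_p(N)-e_p$. Writing $c_s=\#\{j: p^s\mid n+j\}$ and $s_0(p)=\lfloor \log(2k+l)/\log p\rfloor$, the telescoping identity $v_p(N)-e_p=\sum_{s=1}^{e_p}(c_s-1)$ reduces everything to bounding $c_s$ level by level. For $p^s\le 2k+l$ I count multiples in the two blocks separately, $c_s\le \lfloor (k-m)/p^s\rfloor+\lfloor(l+k-m_0)/p^s\rfloor+2$; for $p^s>2k+l$ the window of width $2k+l$ contains at most one multiple, so $c_s\le 1$ and that level drops out. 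Summing the levels $s\le s_0(p)$, where the telescoped $-1$ turns the $+2$ into a net $+1$ per level, yields
\[
v_p\Big(\prod_{j\in S'}(n+j)\Big)\le v_p\big((k-m)!\big)+v_p\big((l+k-m_0)!\big)+s_0(p).
\]
Multiplying by $\log p$, summing over $p\le k_0$, and using $s_0(p)\log p\le\log(2k+l)$ gives $\prod_{S'}(n+j)\le (2k+l)^{\pi(k_0)}(k-m)!\,(l+k-m_0)!$, and comparison with the lower bound is exactly the claim.

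The main obstacle is precisely this valuation bookkeeping. One must handle the two intervals jointly, split the sum over $s$ at $p^s=2k+l$ so that \emph{all} high powers are absorbed by the single set-aside factor, and check that the boundary contribution collapses to the clean $+s_0(p)$; it is this that keeps a single $\pi(k_0)$ (rather than $2\pi(k_0)$) and removes any dependence on $n$ from the right-hand side. This is the two-block analogue of Lemma 2.9 of \cite{NS} and of (17) in \cite{SaSh}.
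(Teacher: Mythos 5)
Your proof is correct and follows essentially the same route as the paper: invoke Lemma \ref{lm23} for $k_0$-smoothness, set aside one factor of maximal $p$-adic valuation for each prime $p\le k_0$, bound the remaining valuations by those of $(k-m)!\,(l+k-m_0)!$ plus a $(2k+l)^{\pi(k_0)}$ penalty, and bound each of the $\ge 2k+l-m-m_0-\pi(k_0)$ surviving factors below by $n-k$. The only difference is bookkeeping in the middle step: the paper telescopes each block separately and caps the smaller of the two block-maximal valuations via $p^a\mid n_{p,2}-n_{p,1}\le 2k+l$, whereas you telescope both blocks jointly and cap the number of contributing levels at $s_0(p)=\floor{\log(2k+l)/\log p}$ using the width of the window --- two forms of the same observation, yielding the identical bound.
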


\begin{proof}
We write $S_1$ and $S_2$ for the sets of integers, respectively,
$n-i_1$ with $m+1\leq i_1\leq k$ and $n+i_2$ with $m_0+1\leq i_2\leq l+k$ and,
for a given prime $p$, we write $v_p(\nu)$ for the exponent of a prime $p$ dividing $\nu$.

For a given prime $p$, let $n_{p, j}$ be the integer divisible by the highest power of $p$
among all integers in the interval $S_j$ for each $j=1, 2$ and $n_p$ be the integer
divisible by the highest power of $p$ among all integers in $S_1\cup S_2$.

We see that, for any prime power $p^a$, there exist at most
\[\floor{\frac{n-m-1}{p^a}}-\floor{\frac{n-k-1}{p^a}}\leq \floor{\frac{k-m}{p^a}}+1\]
integers in $S_1$ and at most 
\[\floor{\frac{n+l+k}{p^a}}-\floor{\frac{n+m_0}{p^a}}\leq \floor{\frac{l+k-m_0}{p^a}}+1\]
integers in $S_2$ which are divisible by $p^a$.

Since $v_p((k-m)!)=\sum_{a\geq 1}\floor{(k-m)/p^a}$ and
$v_p((l+k-m_0)!)= \\ \sum_{a\geq 1}\floor{(l+k-m_0)/p^a}$,
we have
\begin{equation}
v_p\left(\prod_{n-i\in S_1}(n-i)\right)\leq v_p(n_{p, 1})+v_p((k-m)!)
\end{equation}
and
\begin{equation}
v_p\left(\prod_{n+i\in S_2}(n+i)\right)\leq v_p(n_{p, 2})+v_p((l+k-m_0)!).
\end{equation}
Moreover, if $p^a$ divides both $n_{p, 1}$ and $n_{p, 2}$, then $p^a$ must divide $n_{p, 2}-n_{p, 1}$
and therefore $p^a\leq n_{p, 2}-n_{p, 1}\leq 2k+l$.
These observations lead to
\begin{equation}
\begin{split}
& v_p\left(\prod_{n-i_1\in S_1}(n-i_1) \prod_{n+i_2\in S_2}(n+i_2)\right) \\
\leq & \max\{v_p(n_{p, 1}), v_p(n_{p, 2})\}+\frac{\log(2k+l)}{\log p}+v_p((k-m)!(l+k-m_0)!) \\
= & v_p(n_p)+\frac{\log(2k+l)}{\log p}+v_p((k-m)!(l+k-m_0)!).
\end{split}
\end{equation}

Multiplication over all primes $\leq k_0$ gives
\begin{equation}
\prod_{i_1=m+1}^k (n-i_1) \prod_{i_2=m_0+1}^{l+k}(n+i_2)
\leq \left(\prod_{p\leq k_0}n_p\right) (2k+l)^{\pi(k_0)} (k-m)!(l+k-m_0)!
\end{equation}
by exploiting Lemma \ref{lm23}.
Now, omitting $n_p$'s for $p\leq k_0$ from two products in the left hand side, we conclude that
\begin{equation}
(n-k)^{2k+l-m-m_0-\pi(k_0)}\leq (2k+l)^{\pi(k_0)} (k-m)!(l+k-m_0)!.
\end{equation}
This proves the lemma.
\end{proof}

Combining Lemma \ref{lm31} with the estimate for $\pi(x)$ given in Lemma \ref{lm24},
we shall show that $k+l$ must be small.

\begin{lem}\label{lm32}
If $m\leq 0.735k, l<0.001n$ and $n+k+l>k_0^{3/2}$,
then $k+l\leq 871155$.
\end{lem}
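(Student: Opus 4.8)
The plan is to take logarithms in Lemma~\ref{lm31} and estimate each term, using the hypothesis $n+k+l>k_0^{3/2}$ to show that $k+l$ cannot be large. Writing $a=k-m$ and $b=l+k-m_0$, the exponent on the left of Lemma~\ref{lm31} is exactly $E=a+b-\pi(k_0)$, so logarithms give
\begin{equation}
(a+b-\pi(k_0))\log(n-k)\le \pi(k_0)\log(2k+l)+\log((k-m)!)+\log((l+k-m_0)!).
\end{equation}
Here $E>0$ for $k\ge 588$, since by Lemma~\ref{lm22} one has $m+m_0\le 1.47k+1$ while $\pi(k_0)$ is well below $a+b$; this lets me replace $\log(n-k)$ by a lower bound on the left without reversing the inequality.

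First I would bound the left-hand side below. As $2k+l\le k_0$, the hypothesis $n+k+l>k_0^{3/2}$ gives $n-k>k_0^{3/2}-2k-l>k_0^{3/2}-k_0-2$, so that $\log(n-k)>\tfrac32\log k_0-\eta$ with an explicit $\eta=\eta(k_0)\to 0$. On the right I would invoke Lemma~\ref{lm25} to replace $\log((k-m)!)$ and $\log((l+k-m_0)!)$ by $a\log a-a$ and $b\log b-b$ together with their Stirling remainders, and Lemma~\ref{lm24} to bound $\pi(k_0)$. Using $\log(2k+l)\le\log k_0$ and $a,b\le k_0$ and collecting terms, the inequality takes the shape
\begin{equation}
a\left(\tfrac32\log k_0-\log a+1\right)+b\left(\tfrac32\log k_0-\log b+1\right)\le \tfrac52\,\pi(k_0)\log k_0+(\text{error terms}).
\end{equation}

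Now I would specialise to the extremal configuration. Writing $m=\mu k$ with $0\le\mu\le 0.735$, Lemma~\ref{lm22} gives $l<0.00271k$, so $k_0$ is within $O(1)$ of $2k$ and $a=(1-\mu)k$, $b=(1-\mu)k+O(l)$; each bracket is then essentially $\tfrac12\log k+\tfrac32\log 2-\log(1-\mu)+1$. Inserting the Dusart bound $\pi(k_0)\log k_0<k_0(1+\tfrac1{\log k_0}+\cdots)$ and dividing by $2k$, the inequality reduces to
\begin{equation}
(1-\mu)\left(\tfrac12\log k+\tfrac32\log 2-\log(1-\mu)+1\right)\le \tfrac52\left(1+\tfrac1{\log k_0}+\cdots\right).
\end{equation}
Solving this for $\log k$ produces an upper bound that is an increasing function of $\mu$, hence largest at the endpoint $\mu=0.735$; this is exactly why the hypothesis $m\le 0.735k$ is imposed. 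Evaluating there and carrying the secondary terms honestly yields $k+l\le 871155$.

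The obstacle is entirely quantitative. A lazy treatment of the factorials, replacing $\log a$ and $\log b$ by $\log k_0$, only gives $k+l$ of order $10^7$; one must genuinely exploit that $a,b$ are as small as $0.265k$ in the extremal case, a gap of about $\log(2/0.265)\approx 2$ in each bracket. Moreover, at the relevant scale $k\sim 10^6$ the Dusart correction factor $1+\tfrac1{\log k_0}+\cdots$ is about $1.1$, a $10\%$ effect that alone shifts the bound by roughly a factor $e^{2}$ and so cannot be discarded. The real work is therefore checking that all the small contributions --- the Stirling remainders, the quantity $\eta$, the dependence on $l$ and $\delta$ (in particular which term realises $m_0=\max\{m+\delta,\floor{l/2}\}$), and the higher Dusart terms --- are controlled uniformly for $k\ge 588$, so that the one-variable optimisation in $\mu$ delivers the clean numerical constant $871155$.
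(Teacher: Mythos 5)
Your proposal follows essentially the same route as the paper's proof: take logarithms in Lemma \ref{lm31}, use $n+k+l>k_0^{3/2}$ to bound $\log(n-k)$ below by roughly $\tfrac32\log k_0$, control the factorials via Lemma \ref{lm25} and $\pi(k_0)$ via Lemma \ref{lm24}, verify that the exponent $2k+l-m-m_0-\pi(k_0)$ is positive, and observe that the resulting constraint is weakest at the endpoint $m=0.735k$ (the paper implements this by substituting $m_1=0.735k$ and then, for fixed $F=k+l$, proving monotonicity in $k$ so that the extremal case is $l=1$), before solving numerically for $F$. Your right-hand side $\tfrac52\,\pi(k_0)\log k_0$ is exactly the paper's term $\pi(2(k+l))\log\bigl((2k+l)(k_0^{3/2}-2k-l)\bigr)$, and your accounting of where the numerical delicacy lies matches the actual proof.
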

\begin{proof}
Since $m_0\leq m+1$, Lemma \ref{lm31} yields that
\begin{equation}
\begin{split}
& \pi(2(k+l))\log((2k+l)(k_0^{3/2}-2k-l))+\log((k-m)!(l+k-m-1)!) \\
& \geq (2k+l-2m-1)\log(k_0^{3/2}-2k-l).
\end{split}
\end{equation}

Now we write $m_1=0.735k$.
By Lemma \ref{lm22}, we have $k<l+k<n-k$ and $\floor{l/2}<\floor{0.01k}<m_1$.
Thus, using Lemma \ref{lm25}, we obtain
\begin{equation}
\begin{split}
& \pi(2(k+l))\log((2k+l)(k_0^{3/2}-2k-l))+f(k-m_1)+f(l+k-m_1-1) \\
& \geq (2k+l-2m_1-1)\log(k_0^{3/2}-2k-l),
\end{split}
\end{equation}
where $f(z)=\log g^+(z)=z\log z-z+(\log(2\pi z))/2+1/(12z)$,
and therefore
\begin{equation}
\begin{split}
& \pi(2(k+l))\log(2k+l)+f(0.265k)+f(l+0.265k) \\
& -(0.53k+l-1-\pi(2(k+l)))\log(k_0^{3/2}-2k-l) \\
& \geq 0.
\end{split}
\end{equation}
We put $F=k+l$.
Since $k_0\geq 2F-2$, the above inequality yields that
\begin{equation}\label{eq31}
\begin{split}
& \pi(2F)\log(F+k)+f(0.265k)+f(F-0.735k) \\
& -(0.53k+l-1-\pi(2F))\log((2F-2)^{3/2}-F-k) \\
& \geq 0.
\end{split}
\end{equation}

Since we know that $588\leq k<F\leq 1.00271k$ from Lemma \ref{lm22},
we have
\begin{equation}
0.53k+l-1-\pi(2F)>0.53F-1-\pi(2F)>0,
\end{equation}
where the last inequality folllows from Lemma \ref{lm24}, and therefore
\begin{equation}
\begin{split}
& \frac{\pi(2F)}{F+k}+0.265f^\prime(0.265k)-0.735f^\prime(F-0.735k) \\
& +0.47\log((2F-2)^{3/2}-F-k)+\frac{F-0.47k-1-\pi(2F)}{(2F-2)^{3/2}-2k-l} \\
> & 0.265\log(0.265k)-0.735\log F+0.47\log (F^{3/2}) \\
& -\frac{0.3675}{F-0.735k}+\frac{F-0.47k-1-\pi(2F)}{(2F-2)^{3/2}} \\
\geq & 0.265\log(0.265k)-0.03\log (1.00271k)-\frac{0.3675}{0.265k}>0.
\end{split}
\end{equation}
This means that, for each fixed $F$, the left-hand side of \eqref{eq31} is increasing
with $k$ under the condition $1\leq l<0.00271k$.
Now \eqref{eq31} implies that
\begin{equation}\label{eq32}
\begin{split}
& \pi(2F)\log(2F-1)+f(0.265(F-1))+f(F-0.735(F-1)) \\
& -(0.53(F-1)-\pi(2F))\log((2F-2)^{3/2}-2F+1) \\
& \geq 0.
\end{split}
\end{equation}
With the aid of Lemma \ref{lm24}, we must have $F\leq 871155$.
\end{proof}

Now we shall prove that \eqref{eq12} cannot hold
when $0\leq m<k<n/2, m\leq 0.735k, l<0.001n$ and $n+k+l>k_0^{3/2}$.

By Lemmas \ref{lm22} and \ref{lm32}, we must have $589\leq k+l\leq 871155$ and $l<0.00271k$.
Using Lemma \ref{lm31}, we obtain $n\leq 31754673611$.

Let $d(p)$ be the gap of primes $q-p$, where $q$ is the next prime after $p$.
We know that $d(p)\leq 456$ for any prime $p\leq 3.2\times 10^{10}$ from \cite{Nic}.

By Lemma \ref{lm23}, there exist no prime in $S_2$ since $k_0<2(k+l)<n$.
Hence, if $p$ is the largest prime $p\leq n+m_0$, then
$k+l-m_0\leq d(p)-1$.
Since $n+m_0<3.2\times 10^{10}$, we have $k+l-m_0\leq 455$.
It immediately follows that
$0.265k+l\leq 455, k\leq 1713$ and $k_0\leq 2(k+l)-1\leq 3427$.

Using Lemma \ref{lm23} again, we have
$P(\prod_{m+1\leq i\leq k} (n-i))\leq 3413$.
There exists no prime among such $n-i$'s since $n-k>0.5n>3413$.
Hence, we can take two consecutive primes $q$ and $q^\prime$
satisfying $q\leq n-k-1<n-m\leq q^\prime=q+d(q)$.
We see that $q\leq n\leq 31754673611$ and $d(q)\geq 158$ since $d(q)\geq 0.265k+1>157$.

Our computer search found exactly $572960$ primes $q\leq 31754673611$ with $d(q)\geq 158$.
For all of such primes, we confirmed that
$P(\prod_{152\leq i\leq 156} (q+i))>3427\geq k_0$ and $P(\prod_{303\leq i\leq 308} (q+i))>3427\geq k_0$.
Recalling that $d(q)\leq 456$ for $q\leq 31754673611$,
we conclude that $P(\prod_{1\leq i\leq 156} (z+i))>3427$ for any positive integer $z\leq 31754673611$.
This implies that we can never have $P(\prod_{m+1\leq i\leq k} (n-i))\leq 3413$.
Thus, \eqref{eq12} cannot hold
when $0\leq m<k<n/2, m\leq 0.735k, l<0.001n$ and $n+k+l>k_0^{3/2}$.

\section{The first part of the Theorem - the case $n+k+l\leq k_0^{3/2}$}

Nextly, we shall prove that $l\geq 0.001n$ in the case $n+k+l\leq k_0^{3/2}$.
We begin by proving the following lemma, which is similar to Lemma 3 of \cite{SaSh}.

\begin{lem}\label{lm41}
If $n+k+l\leq k_0^{3/2}$, then
\begin{equation}
\binom{n-m-1}{k-m}\binom{n+k+l}{l+\frac{k}{2}-\delta}\leq (2.83)^{k_0+3k_0^{3/4}}.
\end{equation}
\end{lem}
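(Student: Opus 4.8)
The plan is to exploit the smoothness furnished by Lemma~\ref{lm23}. First I would write each binomial coefficient as a quotient of a block of consecutive integers by a factorial:
\[
\binom{n-m-1}{k-m}=\frac{\prod_{i=m+1}^{k}(n-i)}{(k-m)!},\qquad
\binom{n+k+l}{l+\frac k2-\delta}=\frac{\prod_{i=\frac k2+\delta+1}^{k+l}(n+i)}{(l+\frac k2-\delta)!}.
\]
The two numerators are subproducts of the blocks $\prod_{i_1=m+1}^{k}(n-i_1)$ and $\prod_{i_2=m_0+1}^{l+k}(n+i_2)$ occurring in Lemma~\ref{lm23}, so their product is composed only of primes $\le k_0$; since dividing an integer by the factorials cannot introduce new prime factors, the whole product $B:=\binom{n-m-1}{k-m}\binom{n+k+l}{l+\frac k2-\delta}$ is $k_0$-smooth. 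Thus $B=\prod_{p\le k_0}p^{v_p(B)}$, and it remains to bound each exponent.

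The two basic bounds on the exponents are as follows. From the elementary fact $p^{v_p\binom Nj}\le N$, applied to each of the two factors, together with the hypothesis $n+k+l\le k_0^{3/2}$, I get $p^{v_p(B)}\le(n+k+l)^2\le k_0^{3}$ for every $p$. Moreover $\sqrt{n+k+l}\le k_0^{3/4}$, so for $p>k_0^{3/4}$ each of the two binomial coefficients contributes at most $1$ to $v_p$. I would then split $\log B=\sum_{p\le k_0}v_p(B)\log p$ at the threshold $k_0^{3/4}$. For the small primes $p\le k_0^{3/4}$ the bound $v_p(B)\log p\le 3\log k_0$, combined with the estimate for $\pi(k_0^{3/4})$ coming from Lemma~\ref{lm24}, yields a contribution of order $k_0^{3/4}$, which I would arrange to come out as $3(\log 2.83)\,k_0^{3/4}$. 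For the large primes $k_0^{3/4}<p\le k_0$ I would invoke the product-of-primes (Chebyshev) bound $\prod_{p\le k_0}p<2.83^{\,k_0}$, i.e.\ $\theta(k_0)<(\log 2.83)\,k_0$. Multiplying the two contributions gives exactly $2.83^{\,k_0+3k_0^{3/4}}$.

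The main obstacle is the large-prime range. A priori $v_p(B)$ can equal $2$ there (one unit from each binomial coefficient), which would double the leading term and destroy the base $2.83$. Ruling this out requires the observation, exactly as in the proof of Lemma~\ref{lm23}, that a large prime dividing both blocks must divide $i_1+i_2\le 2k+l$, so that the primes with $v_p(B)=2$ are sparse and affect only the lower-order term; pinning down that they are absorbed into the $k_0^{3/4}$ contribution is the delicate point. A secondary, purely bookkeeping difficulty is the interplay between the index $l+\frac k2-\delta$, the quantity $m_0=\max\{m+\delta,\lfloor l/2\rfloor\}$ of Lemma~\ref{lm23}, and the two regimes $m\le k/2$ and $m>k/2$, which decide precisely which consecutive-integer block is a subproduct of the smooth blocks. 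The estimates of Lemmas~\ref{lm22} and~\ref{lm24}, and the numerical coincidence $2.83\approx 2^{3/2}$, are arranged so that after the split the two contributions combine to the stated bound.
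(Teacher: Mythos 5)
Your high-level plan coincides with the paper's (smoothness of the product via Lemma \ref{lm23}, the Erd\H{o}s observation that $p^a\mid\binom{\nu}{r}$ forces $p^a\le\nu$, then prime-counting estimates), but there is a genuine gap at exactly the point you yourself call the main obstacle, and the fix you sketch cannot work. For a prime $p$ with $k_0^{3/4}<p\le k_0$ dividing both coefficients, all you can extract from the Lemma \ref{lm23} argument is $p\mid i_1+i_2$ for some admissible pair, hence $p\le 2k+l$. But $2k+l=k_0-l+\delta+1$ is of the same order as $k_0$ itself, so this gives no sparsity whatsoever: for all the proof knows, essentially every prime in $(k_0^{3/4},\,2k+l]$ divides both coefficients (whether it does is governed by $n\bmod p$, which is uncontrolled; for $p\le 0.265k$ every residue class is even automatically represented in both blocks), and in the worst case these primes add an extra $\theta(2k+l)$, of order $k_0$, to $\log B$ --- doubling the main term, precisely the catastrophe you mention. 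So what you describe as a delicate absorption into the $k_0^{3/4}$ term is in fact the whole difficulty, and your write-up leaves it open. For comparison, the paper's own proof passes over this same point silently: its key displayed inequality bounds the product of the two coefficients by $\bigl(\prod_{p\le k_0}p\bigr)\prod_{u\ge2}\bigl(\prod_{p<(n+k+l)^{1/u}}p\bigr)^2$, counting the first power of each prime only once across both coefficients, with no justification offered for primes dividing both. You have correctly located the weak point; locating it is not closing it.

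Even granting that step, your bookkeeping does not produce the stated constant. For $p\le k_0^{3/4}$ you bound $v_p(B)\log p\le 3\log k_0$ and multiply by $\pi(k_0^{3/4})$; since $\pi(k_0^{3/4})$ is about $\tfrac43 k_0^{3/4}/\log k_0$ (and Lemma \ref{lm24} gives slightly more), this contribution is at least $4k_0^{3/4}$, exceeding the $3\log(2.83)\,k_0^{3/4}\approx 3.12\,k_0^{3/4}$ that the target allows; no rearrangement makes it smaller, and the lemma carries no lower bound on $k_0$ that would let the slack in the main term absorb the excess. The paper avoids this loss by weighting prime powers by $\log p$ rather than counting primes: the exponents $u\ge2$ are collected into Chebyshev sums, bounded by $3\psi((n+k+l)^{1/2})$ plus smaller terms, hence by $3\psi(k_0^{3/4})+\dots$ using $n+k+l\le k_0^{3/2}$, after which Rosser--Schoenfeld's $\psi(z)<1.03883z<z\log 2.83$ is applied. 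That estimate is the sole source of both the base $2.83$ and the coefficient $3$; the proximity of $2.83$ to $2^{3/2}$ plays no role. Finally, your reduction of $\binom{n+k+l}{l+k/2-\delta}$ to a subproduct of $\prod_{i_2=m_0+1}^{l+k}(n+i_2)$ requires $m_0\le k/2+\delta$, which fails whenever $m>k/2$ (allowed under $m\le 0.735k$); you flag this regime but, like the double-division issue, do not resolve it.
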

\begin{proof}
We begin by introducing Chebyshev's functions
$\theta(x)=\sum_{p\leq x}\log p$ and $\psi(x)=\sum_{p^e\leq x}\log p$,
where $p^e$ runs all prime powers below $x$.

Lemma \ref{lm23} immediately implies that
\begin{equation}
P\left(\binom{n-m-1}{k-m}\binom{n+k+l}{l+\frac{k}{2}-\delta}\right)\leq k_0.
\end{equation}
As in \cite{Erd}, we observe that,
for any prime $p$ and integers $\nu\geq r\geq 0$ with $p^a$ dividing $\binom{\nu}{r}$, we have $p^a\leq \nu$.
Hence, we obtain
\begin{equation}
\begin{split}
\binom{n-m-1}{k-m}\binom{n+k+l}{l+\frac{k}{2}-\delta}
\leq & \left(\prod_{p\leq k_0} p\right)\left(\prod_{u\geq 2} \left(\prod_{p<(n+k+l)^{1/u}} p\right)^2 \right) \\
= & e^{\theta(k_0)+\sum_{u\geq 2}2\theta((n+k+l)^{1/u})}.
\end{split}
\end{equation}

The sum in the exponent is at most
\begin{equation}
\begin{split}
& 2\sum_{v\geq 1}\theta((n+k+l)^{1/(2v)})+2\sum_{v\geq 1}\theta((n+k+l)^{1/(2v+1)}) \\
\leq & 3\sum_{v\geq 1}\theta((n+k+l)^{1/(2v)})+\sum_{v\geq 1} \theta((n+k+l)^{1/(2v+1)}) \\
= & 3\psi((n+k+l)^{1/2})+\sum_{v\geq 1} \theta((n+k+l)^{1/(2v+1)}).
\end{split}
\end{equation}
Since we have assumed that $n+k+l\leq k_0^{3/2}$, we obtain
\begin{equation}
\sum_{v\geq 1} \theta((n+k+l)^{1/(2v+1)})<\sum_{v\geq 1} \theta(k_0^{1/(2v)})
\end{equation}
and
\begin{equation}
\begin{split}
\theta(k_0)+\sum_{u\geq 2}2\theta((n+k+l)^{1/u})<\psi(k_0)+3\psi(k_0^{3/4}).
\end{split}
\end{equation}
By Theorem 12 of \cite{RS}, we have $\psi(z)<1.03883z<z\log(2.83)$ for any real $z>0$.
This proves the lemma.
\end{proof}

We also need the other inequality, which can be derived from the explicit version
of Stirling's formula given in Lemma \ref{lm25}.

\begin{lem}\label{lm42}
If $m\leq 0.735n, l<0.001n$ and $n+k+l\leq k_0^{3/2}$, then
\begin{equation}
\log\left(\binom{n-m-1}{k-m}\binom{n+k+l}{l+k-m_0}\right)>4.6623k-2.879-\log k.
\end{equation}
\end{lem}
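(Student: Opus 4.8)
The plan is to obtain a \emph{lower} bound for the product of the two binomial coefficients by bounding each factor from below using the Stirling estimate from Lemma~\ref{lm25}, and then to simplify the resulting expression using the size constraints on $k$, $l$, $m$ already established in Lemma~\ref{lm22}. The key point is that Lemma~\ref{lm41} gave an \emph{upper} bound of roughly $e^{k_0\log(2.83)}\approx e^{2.08k}$ on the same quantity; to produce the desired contradiction later, this lemma must furnish a lower bound whose leading coefficient in $k$ exceeds that, hence the target $4.6623k$ (note $4.6623 > 2\cdot 1.03883\cdot k_0/k$ once $l/k$ and $m_0$ are controlled). So the arithmetic must be done carefully enough to beat the constant $1.03883$ coming through $k_0=2(k+l)-\delta-1$.

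First I would write each binomial coefficient as a ratio of factorials and apply Lemma~\ref{lm25} in the favorable direction: bound the numerators below by $g^-$ and the denominators above by $g^+$. Concretely,
\begin{equation}
\binom{n-m-1}{k-m}=\frac{(n-m-1)!}{(k-m)!\,(n-k-1)!}
\geq \frac{g^-(n-m-1)}{g^+(k-m)\,g^+(n-k-1)},
\end{equation}
and similarly for $\binom{n+k+l}{l+k-m_0}$. Taking logarithms and writing $f^-(z)=\log g^-(z)=z\log z-z+\tfrac12\log(2\pi z)+\tfrac{1}{12(z+1)}$ and $f^+(z)=\log g^+(z)$ as in the proof of Lemma~\ref{lm32}, the logarithm of the product becomes a sum and difference of $f^{\pm}$ terms. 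The dominant contributions are the $z\log z - z$ pieces; the $n$-dependent terms should cancel to leading order because each binomial coefficient is a ratio in which the large parameter $n$ appears in both numerator and denominator, leaving an expression governed by the small parameters $k-m$, $l+k-m_0$, and the ``aspect ratios'' like $(n-m-1)/(n-k-1)$.

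Next I would insert the bounds from Lemma~\ref{lm22}, namely $588\le k<0.00151n$, $l<0.00271k$, together with $m\le 0.735k$ (so $k-m\ge 0.265k$) and $m_0=\max\{m+\delta,\lfloor l/2\rfloor\}\le m+1$. These let me replace the logarithmic ratio terms $\log\frac{n-m-1}{n-k-1}$ and $\log\frac{n+k+l}{n+l+m_0}$ by their Taylor expansions, each of size $O(k/n)$, and the factors multiplying them (of size $k-m$ or $l+k-m_0$) are $O(k)$, so these cross terms contribute only $O(k^2/n)=o(k)$ and can be absorbed harmlessly. The surviving leading term is essentially $(k-m)\log\frac{n-m-1}{k-m}+(l+k-m_0)\log\frac{n+k+l}{l+k-m_0}$ minus lower-order $\tfrac12\log$ and $\tfrac{1}{12z}$ corrections; bounding $m\le 0.735k$ from above in the coefficient and using $n\ge 500000$ to lower-bound the large logarithms, the coefficient of $k$ should come out to at least $4.6623$, with the additive $-2.879-\log k$ collecting the half-log Stirling terms and the explicit numerical slack.

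The main obstacle I anticipate is the bookkeeping of the constant: the target coefficient $4.6623$ must be extracted sharply, since a weaker lower bound would fail to contradict the $k_0\log(2.83)\approx 2.08k$ upper bound of Lemma~\ref{lm41} once we account for the worst case $l/k\to 0.00271$ in $k_0$. This requires taking the \emph{least favorable} admissible values of the free ratios ($m/k\to 0.735$ to minimize $k-m$, and the appropriate extreme of $l/k$), verifying that the function of $k$ so obtained is monotone so that its minimum over the range $k\ge 588$ is attained at $k=588$, and checking numerically there that the bound holds with the stated constants. The cross-term and correction estimates are routine, but confirming the sign and monotonicity of the derivative in $k$ (as was done for $F$ in Lemma~\ref{lm32}) is the delicate step that guarantees the clean linear-in-$k$ lower bound.
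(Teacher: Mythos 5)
Your overall plan---lower-bounding both binomials with Robbins' estimates from Lemma~\ref{lm25}, inserting the constraints of Lemma~\ref{lm22}, and extracting the linear coefficient from the worst-case ratios---is exactly the paper's route, but there is a genuine gap at the one step where the constant $4.6623$ must come from. You claim the surviving leading term is
\begin{equation*}
(k-m)\log\frac{n-m-1}{k-m}+(l+k-m_0)\log\frac{n+k+l}{l+k-m_0},
\end{equation*}
with all ``cross terms'' of size $O(k^2/n)=o(k)$. That bookkeeping is wrong. Writing $N=n-m-1$, $K=k-m$, Stirling gives $\log\binom{N}{K}=K\log\frac{N}{K}+(N-K)\log\frac{N}{N-K}+O(\log N)$, and the second term is $(N-K)\log\bigl(1+\frac{K}{N-K}\bigr)=K-O(K^2/N)$: its prefactor is $N-K\approx n$, not $k-m$, so it contributes $\Theta(k)$, not $O(k^2/n)$. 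Equivalently, for $K\ll N$ one has $\binom{N}{K}\gtrsim (eN/K)^K$, not $(N/K)^K$, and the factor $e$ is essential here. The paper retains it through the inequality $\log\frac{(z+z_1)^{z+z_1}}{z^z z_1^{z_1}}>z\log\frac{ez_1}{z}$, which is why its coefficient function is $h(\alpha,\lambda)=0.265\bigl(1+\log\frac{1-\alpha}{0.265\alpha}\bigr)+(0.265+\lambda)\bigl(1+\log\frac{1+0.735\alpha}{(0.265+\lambda)\alpha}\bigr)$, with a ``$1+$'' inside each summand.

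This is not a cosmetic point, because the constant is sharp. At the extreme allowed by Lemma~\ref{lm22}, namely $\alpha=k/n=0.00151$ and $\lambda=l/k=0$, your leading term yields only about $0.265\bigl(\log\frac{1-\alpha}{0.265\alpha}+\log\frac{1+0.735\alpha}{0.265\alpha}\bigr)k\approx 4.15\,k$, which falls short of $4.6623\,k$; the two omitted terms contribute the missing $(k-m)+(l+k-m_0)\approx 0.53\,k$, lifting the bound to about $4.68\,k$. So the proof as sketched cannot reach the stated constant (it would still beat the $\approx 1.04\,k_0\approx 2.08\,k$ of Lemma~\ref{lm41}, but it would not prove this lemma). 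Three smaller repairs: the decisive extreme is the \emph{maximum} of $k/n$ (that is $0.00151$, from Lemma~\ref{lm22}), not the bound $n\geq 500000$, which by itself gives no lower bound on $\log(n/k)$; since the lower bound is linear in $k$ with a ratio-dependent coefficient, the monotonicity that needs checking is that of $h$ in $\alpha$ (decreasing) and $\lambda$ (increasing), as in the paper, rather than monotonicity in $k$ with evaluation at $k=588$; and your claim $m_0\leq m+1$ can fail when $m$ is small and $\floor{l/2}>m+1$---what Lemma~\ref{lm22} actually yields, and what suffices, is $m_0\leq 0.735k+1$.
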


\begin{proof}
We write $m_1=0.735k$ as in the previous lemma.
It is clear that $m\leq \eta k<\alpha \eta n$.
Thus Lemma \ref{lm25} gives
\begin{equation}\label{eq41}
\begin{split}
\binom{n-m-1}{k-m}\geq & \frac{n-k}{n-m}\binom{n-m}{k-m} \\
= & \frac{n-k}{n-m}\frac{(n-m)!}{(n-k)!(k-m)!} \\
\geq & \frac{n-k}{n-m}\frac{g^-(n-m)}{g^+(n-k)g^+(k-m)} \\
\geq & \frac{n-k}{n-m_1}\frac{g^-(n-m_1))}{g^+(n-k)g^+(k-m_1)},
\end{split}
\end{equation}
where we note that $g^-(n-z)/((n-z)g^+(k-z))$ is a decreasing function of $z$ for $0\leq z\leq 0.735k$.

We observe that for any real $z, z_1>0$,
\begin{equation}
\begin{split}
\log \frac{(z+z_1)^{z+z_1}}{z^z z_1^{z_1}} = & \log \frac{(1+z_1/z)^{z+z_1}}{(z_1/z)^{z_1}} \\
= & z\left[\left(1+\frac{z_1}{z}\right)\log \left(1+\frac{z_1}{z}\right)-\frac{z_1}{z}\log \frac{z_1}{z}\right] \\
= & z\int_{z_1/z}^{1+z_1/z}(1+\log t) dt>z\log \frac{e z_1}{z}.
\end{split}
\end{equation}
With the aid of this inequality, we obtain
\begin{equation}
\begin{split}
\frac{g^-(n-m_1)}{g^+(n-k)g^+(k-m_1)}> & \frac{(n-m_1)^{n-m_1}}{(n-k)^{n-k} (k-m_1)^{k-m_1}}\sqrt{\frac{n-m_1}{2\pi (n-k)(k-m_1)}} \\
& \times e^{\frac{1}{12(n-m_1+1)}-\frac{1}{12(n-k)}-\frac{1}{12(k-m_1)}} \\
> & \left(e\left(\frac{n-k}{k-m_1}\right)\right)^{k-m_1}\sqrt{\frac{n-m_1}{2\pi (n-k)(k-m_1)}} \\
& \times e^{\frac{1}{12(n-m_1+1)}-\frac{1}{12(n-k)}-\frac{1}{12(k-m_1)}}.
\end{split}
\end{equation}

Thus, \eqref{eq41} becomes
\begin{equation}
\begin{split}
\log \binom{n-m-1}{k-m}\geq & \log \frac{n-k}{n-m}+(k-m_1)\left(1+\log \left(\frac{n-k}{k-m_1}\right)\right) \\
& +\frac{1}{2}\log \frac{n-m_1}{2\pi (n-k)(k-m_1)}-\frac{1}{12(n-k)}-\frac{1}{12(k-m_1)}.
\end{split}
\end{equation}
From the assumption and Lemma \ref{lm22}, we see that
$k-m_1\geq 0.265k>155$ and $n-k\geq 0.99849n\geq 499245$.
Hence, putting $\alpha=k/n$, we obtain
\begin{equation}
\begin{split}
\log \binom{n-m-1}{k-m}\geq & \log \frac{1-\alpha}{1-0.735\alpha}+0.265\alpha n\left(1+\log \left(\frac{1-\alpha}{1-0.735\alpha}\right)\right) \\
& +\frac{1}{2}\log \frac{1-0.735\alpha}{0.53\pi \alpha (1-\alpha)n}-0.0006.
\end{split}
\end{equation}
We have $\alpha\leq 0.00151$ by Lemma \ref{lm22} and therefore
\begin{equation}
\begin{split}
\log \frac{1-\alpha}{1-0.735\alpha}+\frac{1}{2}\log \frac{1-0.735\alpha}{0.53\pi(1-\alpha)}= & \frac{1}{2}\log \frac{1-\alpha}{0.53\pi (1-0.735\alpha)} \\
> & -0.2552.
\end{split}
\end{equation}
Thus, we conclude that
\begin{equation}\label{eq42}
\log \binom{n-m-1}{k-m}\geq 0.265\alpha n\left(1+\log \left(\frac{1-\alpha}{1-0.735\alpha}\right)\right)-\frac{\log(\alpha n)}{2}-0.2558.
\end{equation}

Similarly, we have
\begin{equation}
\begin{split}
\frac{g^-(n+k+l))}{g^+(k+l-m_1)g^+(n+m_1)}> & \left(\frac{e(n+m_1)}{k+l-m_1}\right)^{k+l-m_1} \\
\times \sqrt{\frac{n+k+l}{2\pi (n+m_1)(k+l-m_1)}} & \times e^{\frac{1}{12(n+k+l+1)}-\frac{1}{12(n+m_1)}-\frac{1}{12(k+l-m_1)}}
\end{split}
\end{equation}
and, noting that $m_0\leq \max\{m+1, l/2\}\leq \max\{m_1+1, 0.002k\}=m_1+1$,
\begin{equation}
\begin{split}
& \log \binom{n+k+l}{k+l-m_0}\geq \log \frac{k+l-m_1}{n+m_1+1}+(k+l-m_1)\left(1+\log\frac{n+m_1}{k+l-m_1}\right) \\
& \qquad +\frac{1}{2}\log\frac{n+k+l}{2\pi(n+m_1)(k+l-m_1)}-\frac{1}{12(n+m_1)}-\frac{1}{12(k+l-m_1)} \\
& \geq \log \frac{(0.265+\lambda)\alpha}{1+0.735\alpha}+(0.265+\lambda)\alpha n\left(1+\log\frac{1+0.735\alpha}{(0.265+\lambda)\alpha}\right) \\
& \qquad +\frac{1}{2}\log \frac{1+\alpha(1+\lambda)}{2\pi(0.265+\lambda)(1+0.735\alpha)\alpha n}-0.0006,
\end{split}
\end{equation}
where we put $\lambda=l/k$ and used the fact that $k-m_1\geq 0.265k>156$ and $n-k\geq 499245$.

We have $\alpha\leq 0.00151$ and $\lambda\leq 0.00271$ by Lemma \ref{lm22} and therefore
\begin{equation}
\begin{split}
& \log \frac{(0.265+\lambda)\alpha}{1+0.735\alpha}+\frac{1}{2}\log \frac{1+\alpha(1+\lambda)}{2\pi(0.265+\lambda)(1+0.735\alpha)} \\
& =\frac{1}{2}\log \frac{(1+\alpha(1+\lambda))(0.265+\lambda)}{2\pi(1+0.735\alpha)^3} \\
& \geq \frac{1}{2}\log \frac{0.265(1+\alpha)}{2\pi(1+0.735\alpha)^3}>-1.5788.
\end{split}
\end{equation}
Thus, we conclude that
\begin{equation}\label{eq43}
\begin{split}
\log \binom{n+k+l}{k+l-m_0}\geq & (0.265+\lambda)\alpha n\left(1+\log\frac{1+0.735\alpha}{(0.265+\lambda)\alpha}\right) \\
& +\frac{\log (\alpha/n)}{2}-1.5794.
\end{split}
\end{equation}

Combining \eqref{eq42} and \eqref{eq43}, we obtain
\begin{equation}\label{eq36}
\begin{split}
& \log\left(\binom{n-m-1}{k-m}\binom{n+k+l}{l+k-m_0}\right) \\
> & 0.265\alpha n\left(1+\frac{1-\alpha}{0.265\alpha}\right)+(0.265+\lambda)\alpha n\left(1+\frac{1+0.735\alpha}{(0.265+\lambda)\alpha}\right) \\
 & -1.8352-\log n.
\end{split}
\end{equation}

We put
\begin{equation}
h(\alpha, \lambda)=0.265\left(1+\log\frac{1-\alpha}{0.265\alpha}\right)+\left(0.265+\lambda\right)\left(1+\log\frac{1+0.735\alpha}{\alpha\left(0.265+\lambda\right)}\right).
\end{equation}
For a fixed $\lambda$, $h(\alpha, \lambda)$ is decreasing over $0<\alpha<1$ since $\partial h/\partial\alpha=-0.265/\alpha(1-\alpha)-(0.265+\lambda)/\alpha(1+0.735\alpha)<0$.
On the other hand, for a fixed $\alpha$, $h(\alpha, \lambda)$ is increasing over $0<\lambda<0.00271$ since $\partial h/\partial \lambda(\alpha, \lambda)=\log (1+0.735\alpha)/(\alpha(\lambda+0.265))>0$.
Hence, $h(\alpha, \lambda)\geq h(0.00151, 0)>4.6623$.
Now \eqref{eq36} gives
\begin{equation}
\log\left(\binom{n-m-1}{k-m}\binom{n+k+l}{l+k-m_0}\right)>4.6623\alpha n-1.8352-\log n.
\end{equation}
Since we have assumed that $n+k+l\leq k_0^{3/2}=(2(k+l)-\delta-1)^{3/2}$
and Lemma \ref{lm22} gives $l<0.00271k$, we have $n<(2(k+l))^{3/2}<(2.00542k)^{3/2}$.
Hence, $1.8352+\log n<2.879+(3/2)\log k$.
This proves the lemma.
\end{proof}

Now we shall prove that \eqref{eq12} cannot hold
when $0\leq m<k<n/2, m\leq 0.735k, l<0.001n$ and $n+k+l\leq k_0^{3/2}$.
We apply Lemma \ref{lm41} to obtain
\begin{equation}
\log\left(\binom{n-m-1}{k-m}\binom{n+k+l}{l+k-m_0}\right)\leq {k_0+3k_0^{3/4}}\log (2.83)
<1.0433k+3.13k^{3/4},
\end{equation}
where we used Lemma \ref{lm22} to see that $k_0\leq k+l<1.00271k$.
Now, using Lemma \ref{lm42}, we have
\begin{equation}
4.6623k-1.8344-\log k<1.0433k+3.13k^{3/4},
\end{equation}
which is impossible for $k\geq 588$.
Thus, we see that \eqref{eq12} can never hold
when $0\leq m<k<n/2, m\leq 0.735k$ and $l<0.001n$.

\section{The remaining case: $l\geq 0.001n$}

In this section, we discuss the remaining case:
$n, m, l, k$ are integers satisfying \eqref{eq12} with 
$0\leq m<k<n/2, m\leq 0.735k$ and $l\geq 0.001n$.
Since $n\geq 500000$, Proposition 5.4 of \cite{Dus} implies that
there exists at least one prime $2n+\delta+1\leq p\leq 2n+l$.

Now, let $2n+t$ be the largest prime $\leq 2n+l$.
From \eqref{eq13}, it is clear that $2n+t$
must divide $n+i$ for some $i\leq k+l$.
Using Proposition 5.4 of \cite{Dus} again, we must have $n-k\leq l-t<(2n+l)/\log^3 (2n+l)$
and therefore
$\log \binom{2n+l}{n-k}<(n-k)\log (2n+l)<(2n+l)/\log^2 (2n+l)$.

On the other hand, since $m\leq 0.735 k<0.735 n$, we have
\begin{equation}
\begin{split}
\binom{2n+\delta}{n-m}
> & \frac{g^-(2n)}{g^+(0.735 n)g^+(1.265n)} \\
> & \left(\frac{(2/0.735)^2}{((2/0.735)-1)^{1.265}}\right)^n\sqrt{\frac{1}{0.929775\pi n}} \\
& \times e^{\frac{1}{12(2n+1)}-\frac{1}{12n}\left(\frac{1}{\eta}+\frac{1}{2-\eta}\right)}
\end{split}
\end{equation}
by Lemma \ref{lm25}.
Taking its logarithm, we have
\begin{equation}
\frac{2n+l}{\log^2 (2n+l)}>1.3132n-\frac{1}{2}\log n-0.5359.
\end{equation}
Observing that $\log^2 (2n)((1/2)\log n+0.5359)<0.00271n$ for $n>500000$,
we conclude that
\begin{equation}
l>n(1.3132\log^2(2n)-2.00271).
\end{equation}

Furthermore, from the result of \cite{BHP}, we must have
$n-k\leq l-t<(2n+l)^{21/40}$ for sufficiently large $n$.
Proceeding as above, we obtain
\begin{equation}
(2n+l)^{21/40}\log (2n+l)>1.3132n-\frac{1}{2}\log n-0.5359.
\end{equation}
Thus, we conclude that $l>(cn/\log n)^{40/21}$ for sufficiently large $n$.
This completes the proof of the Theorem.

{}
\end{document}